\documentclass{amsart}
\usepackage{cma}
\usepackage[colorlinks=true, citecolor=blue, linkcolor=red, urlcolor=blue]{hyperref}
\usepackage[T1]{fontenc}
\usepackage[utf8]{inputenc}
\usepackage[english]{babel}
\usepackage{amsfonts,amssymb,amsmath,mathrsfs,amscd}
\usepackage{graphicx}
\usepackage{array,longtable}
\usepackage{color}
\usepackage{tikz}
\usepackage{wrapfig}
\usepackage[normalem]{ulem}
\usepackage{enumitem}

\newcommand\suppress[1]{}

\newlength\wvtextpercent
\theoremstyle{plain}

\theoremstyle{definition}

\renewenvironment{proof}{\textsc{Proof.}}{\hfill$\square$}

\begin{document}
\renewcommand{\descriptionlabel}[1]{\hspace\labelsep \normalfont\bfseries #1}
\title{On Best Lacunary System in Orlicz Spaces}
\author{Olga Zavarzina}

\address{Lomonosov Moscow State University, Faculty of Mechanics and Mathematics, Leninskie Gory 1, Main Building, 119991, Moscow, Russia}
\email{olga.zavarzina@math.msu.ru}

\subjclass[2020]{46E30, 26D07}
\keywords{Khintchine inequality, Orlicz spaces, best lacunary systems, direction cosines, hypergeometric function}

\begin{abstract} Stechkin's classical results on the best lacunary system in $L_p$ spaces, given by the direction cosines on the unit sphere, are extended to Orlicz spaces $L_{\Phi}$. It is shown that for any $N$-function ${\Phi}$ the $L_{\Phi}$-norm of a linear combination of the direction cosines is completely determined by the $\ell_2$-norm of the coefficient vector. Consequently, the system is $S_{\Phi}(M)$-lacunary with a constant $M=K_{{\Phi},n} \sqrt{n}$, where $K_{{\Phi},n}$ coincides with the $L_{\Phi}$-norm of a single coordinate function. Moreover, under the additional convexity condition on ${\Phi}(\sqrt{u})$, this constant is proved to be optimal, so the direction cosines form the best lacunary system in Orlicz spaces. Explicit formulas for the constant are derived for $N$-functions ${\Phi}(u)=e^{u^2}-1$ and ${\Phi}(u)=\cosh{u}-1$, and expressed in terms of hypergeometric functions.
\end{abstract}

\maketitle

\section{Introduction}

In the theory of function spaces, the question of estimating norms of linear combinations of functions stands as one of the key problems. A classical result in this area is Khintchine inequalities established in 1923. In the work \cite{Khintchine1923}, it is proved that for any sequence of independent random variables $(\varepsilon_k)_{k\ge1}$, taking on values $\pm 1$ with equal probability (the Rademacher system), and for any $0<p<\infty$, there exist positive constants $A_p,B_p$ such that for any $a_1,\dots,a_n\in \mathbb{R}$ the following holds
\[
A_p\Big(\sum_{k=1}^n a_k^2\Big)^{1/2}
\le
\Big(\mathbb E\Big|\sum_{k=1}^n \varepsilon_k a_k\Big|^p\Big)^{1/p}
\le
B_p\Big(\sum_{k=1}^n a_k^2\Big)^{1/2}.
\]
The optimal values of constants $A_p$ and $B_p$ in this inequality for the Rademacher system were obtained by U. Haagerup \cite{Haagerup1981}. In particular, for the upper constant $B_p$, there is an explicit formula
\[
B_p=
\begin{cases}
1, & 0<p\le2,\\[6pt]
2^{1/2}\left(\dfrac{\Gamma\big((p+1)/2\big)}{\sqrt{\pi}}\right)^{1/p}, & p>2,
\end{cases}
\]
where $\Gamma$ is the gamma function.

The first significant generalization of Khintchine inequalities to the case of arbitrary independent random variables \(X_1,\dots,X_n\) with \(\mathbb{E}X_i=0\) and \(\mathbb{E}|X_i|^p<\infty\) for some $p>1$ was made by J. Marcinkiewicz and A. Zygmund \cite{Marcinkiewicz1937}. Under these conditions, there exist positive constants $A_p,B_p$, depending only on \(p\), such that
 \[
    A_p \, \mathbb{E}\!\left( \sum_{i=1}^n X_i^2 \right)^{p/2} \leq \mathbb{E}\!\left| \sum_{i=1}^n X_i \right|^p \leq B_p \, \mathbb{E}\!\left( \sum_{i=1}^n X_i^2 \right)^{p/2}.
    \]
Haskell P. Rosenthal \cite{Rosenthal1970} in 1970 developed this idea providing a more structural and precise estimate
    \[
\Big(\mathbb{E}\Big|\sum_{i=1}^n X_i\Big|^p\Big)^{1/p}
\le D_p \cdot \max\!\Big\{ \big(\sum_{i=1}^n \mathbb{E} X_i^2\big)^{1/2},
\big(\sum_{i=1}^n \mathbb{E}|X_i|^p\big)^{1/p} \Big\},
\]
   which holds for $p > 2$ and $D_p$ is a constant depending only on $p$. Another major advance in the development of the theory was finding exact values of constants in Rosenthal inequality. The first considerable result in this direction was the work of S. A. Utev \cite{Utev1985}, who showed that the constant $D_p$ for $p>4$ may be chosen as $ \| \mathcal{P}(1) \|_p$, where $\mathcal{P}(1)$ stands for the symmetrized Poisson random variable with parameter 1. For the range $2 < p \le 4$ the problem was solved in the work \cite{Figiel1997} by T. Figiel, P. Hitchenko, W. B. Johnson, G. Schechtman, and J. Zinn. In this article, it is proved that for independent symmetric random variables
$X_1,\dots,X_n$ with $2<p\le 4$, the optimal constant $D_p$ in Rosenthal inequality is
\[
D_p = \big(1 + \|g\|_p^p\big)^{1/p},
\]
where $g$ is a standard normal random variable
and $\|g\|_p = (\mathbb{E}|g|^p)^{1/p}$. 

S. B. Stechkin worked on the problem of optimality for systems of $n$ functions that satisfy the upper Khintchine inequality. In his paper \cite{Stechkin1961} devoted to the study of such lacunary systems, the following definition appears. 
\begin{definition}  Let \( p > 2 \) and \( M > 0 \). A system of linearly independent functions \( \{f_k\}_{k=1}^n \), defined on a space \( E \) with a probability measure \( \mu \), is called \textit{\( S_p(M) \)-lacunary} if \( f_k \in L^2(E) \) (\( k = 1, \ldots,n \)) and for any \( a_1, \ldots, a_n \in \mathbb{R} \) the inequality
\begin{equation}
\label{Eq:lacunary}
\left\| \sum_{k=1}^n a_k f_k \right\|_{p} \leq M \left\| \sum_{k=1}^n a_k f_k \right\|_{2},
\end{equation}
holds true, where
\[
||f||_p=\Big( \int\limits_E |f(t)|^p dt\Big)^{1/p} \quad (p>0).
\]
A system of functions $\{f_k\}_{k=1}^n$ is called \textit{the best lacunary system} for given $p>2$ and $n\in \mathbb{N}$ if the constant $M$ in \eqref{Eq:lacunary} is minimal for all possible systems of $n$ functions satisfying a similar condition.
\end{definition}
In \cite{Stechkin1961}, Stechkin also provided a construction of such a best lacunary system for any $n\in \mathbb{N}$ and an arbitrary $p> 2$. Its elements are direction cosines of the outward normal to the unit Euclidean sphere in $\mathbb{R}^n$. The idea behind the next notion can be found in Gaposhkin’s paper \cite{Gaposhkin1967}.

\begin{definition}  An infinite system of functions $\{f_k\}_{k=1}^\infty$, \( f_k \in L^2(E) \), is called an \textit{asymptotically \( S_p(M) \)-lacunary system} if inequality \eqref{Eq:lacunary} holds for all $n\in \mathbb{N}$ and \( a_1,\ldots, a_n \in\mathbb{R}\). If the constant $M$ in this case is minimal for all possible systems satisfying such a condition, then this system is called \textit{asymptotically best} for the given $p>2$.
\end{definition}

 V. F. Gaposhkin established that the Rademacher system forms an asymptotically best lacunary system in the case of $L^p$ spaces, thereby refining Stechkin's fundamental results.

 The study of lacunary systems of functions is carried out by means of functional analysis, methods of probability theory and the theory of stochastic processes. In recent decades, a number of deep results have been proved in this area, in particular, theorems by J. Bourgain. For a compact abelian group G with dual group $\Gamma$, a subset $\Lambda\subset\Gamma$ is called a $\Lambda(p)$-set (for $p>2$) if the closure of the linear span of the characters from $\Lambda$ in $L^p(G)$ coincides with the closure in $L^2(G)$. Equivalently, there exists a constant $M>0$ such that for every trigonometric polynomial $f$ with frequencies in $\Lambda$,
\[
\|f\|_{L^p(G)}\le M\|f\|_{L^2(G)}.\]
The smallest such constant is called the $\Lambda(p)$-constant of $\Lambda.$ In \cite{Bourgain1989}, Bourgain showed that for any uniformly bounded orthogonal system of $n$ functions, a subsystem of size $\sim n^\frac{2}{p} $, that is \( S_p(M) \)-lacunary with a constant $M$ depending only on $p$, can be selected. Moreover, it was proved that for any $2<p<\infty$ there exists a set $\Lambda \subset \mathbb{Z}$ which is a $\Lambda(p)$-set, but is not a $\Lambda(r)$-set for any $r>p$. An equally significant achievement was the article by S. J. Szarek \cite{Szarek1986}, where he provided a generalization of Bourgain's result to the case of vector coefficients.

The aforementioned work \cite{Figiel1997}, moreover, develops a general method for studying optimality problems in classes of Orlicz functions. As was later demonstrated by Peškir \cite{Peskir1993}, a particular case of this method yields the sharp constant $C=\sqrt{8/3}$ in the Khintchine inequality for the exponential Orlicz space generated by $\Phi(x)=e^{x^2}-1$, and it is also employed in the present work for deriving the main results.

\begin{definition}
 An \textit{$N$-function} is an even, convex and continuous function ${\Phi}:\mathbb{R}\rightarrow [0,\infty)$ such that ${\Phi}(u)>0, u\neq 0, {\Phi}(0)=0 $ and
\[
\lim_{u \to 0} \frac{{\Phi}(u)}{u} = 0, \quad \lim_{u \to \infty} \frac{{\Phi}(u)}{u} = \infty.
\]
\end{definition}
\begin{example}
(a) ${\Phi}(u)=|u|^p, p>1,$

(b) ${\Phi}(u)=e^{|u|}-1,$

(c) ${\Phi}(u)=|u|^p \ln{(1+|u|)}, p>1.$
\end{example}

\begin{definition} Let ${\Phi}$ be an $N$-function. An \textit{Orlicz space} $L_{\Phi}=L_{\Phi}(S_n)$ consists of all measurable functions $f: S_n \rightarrow \mathbb{R} $, for which there exists $\lambda = \lambda(f) > 0$ such that
\[
\int\limits_{S_n} {\Phi} \left( \frac{|f(s)|}{\lambda} \right) ds < \infty.
\]
In an Orlicz space, the Luxemburg norm is defined:
\[
\left\| f \right\|_{L_{\Phi}} = \inf \Big\{ \lambda >0: \int\limits_{S_n} {\Phi} \left( \frac{|f(s)|}{\lambda} \right) ds \leq 1 \Big\}.
\]
\end{definition}

In the present work, the best lacunary system of direction cosines from the mentioned Stechkin's work is investigated. Firstly, it is shown that the property of being lacunary originally established for $L^p$-spaces can be transferred to a broader class of Orlicz spaces $L_{\Phi}$. Specifically, the $L_{\Phi}$-norm of an arbitrary linear combination of functions of this sequence, as in the case of $L^p$, is completely determined by the $l^2$-norm of the sequence of its coefficients with a constant $K_{{\Phi},n} \sqrt{n}$. Secondly, for two important examples of $N$- functions, $\Phi(u)=e^{u^2}-1$ and $\Phi(u)=\cosh{u}-1$, the corresponding constants are obtained in explicit forms through hypergeometric equations. Thirdly, it is proved that under an additional condition of convexity of ${\Phi}(\sqrt{u})$ imposed on the $N$-function $\Phi$ the system of direction cosines remains the best lacunary system, which generalizes Stechkin's results to the case of Orlicz spaces.

\section{ Notations}

This section lists the main notations used throughout the paper.

\begin{description}
\item[$\mathbb{R}^n$] is the $n$-dimensional Euclidean space.
\item[$S_n$] is the unit sphere in $\mathbb{R}^n$.
\item[$n!$] is the factorial: $n! = 1\cdot2\cdots n$ for $n\in\mathbb{N}$, and $0! = 1$.
\item[$\Gamma(a)$] is the the gamma function.
\item[$B(a,b)$] is the beta function.
\item[$(a)_m$] is the the Pochhammer symbol:
\[
(a)_m=\dfrac{\Gamma(a+m)}{\Gamma(a)}.\]
\item[${}_1F_1(a;c;z)$] is the confluent hypergeometric function (Kummer's function):
\[
{}_1F_1(a;c;z)=\sum_{m=0}^\infty \frac{(a)_m}{(c)_m}\frac{z^m}{m!}.\]

\item[${}_0F_1(;b;z)$] is the generalized hypergeometric function:
\[{}_0F_1(;b;z)=\sum_{m=0}^\infty \frac{1}{(b)_m}\frac{z^m}{m!}.\]
\item[$L^p$] is the Lebesgue space with norm
\[
||f||_p = \Big(\int\limits_{S_n}|f|^p ds\Big)^{1/p}, 1\le p<\infty.
\]
\item[$L_{\Phi}$] is the Orlicz space generated by an $N$-function ${\Phi}$; the Luxemburg norm is defined as
\[||f||_{L_{\Phi}} = \inf\Big\{\lambda>0: \int\limits_{S_n} {\Phi}\left( \frac{|f(s)|} {\lambda}\right) ds \le 1\Big\}.
\]
\item[$\mathbb{E}$] is the expectation.
\item[$O(n)$] is the orthogonal transformation group.
\item[$\langle\cdot,\cdot\rangle$] is the inner product.
\item[$(E,\mathcal A,\mu)$] is a probability space with measure $\mu$.
\end{description}

\section{The System of Direction Cosines}
 Let \( \mathbb{R}^n \) be an \( n \)-dimensional Euclidean space, consisting of points \( x = (x_1, x_2, \ldots, x_n) \), \( S_n \) be the unit sphere in this space, given by the equation:
\[
||x||_2^2=\sum_{k=1}^n x_k^2 = 1
\]
The area of the sphere \( S_n \) is denoted by \( \mu_n \) and expressed as
\begin{equation}
\label{Eq:Square}
\mu_n = \frac{2\pi^{n/2}}{\Gamma(n/2)}
\end{equation}

On the set \(  S_n \) with the normalized Lebesgue measure \( ds=\frac{d\sigma}{\mu_n}\), we can define a system of \( n \) linearly independent functions \( \alpha_1(s), \alpha_2(s), \ldots, \alpha_n(s) \) as follows: if \( s = (x_1, x_2, \ldots, x_n) \) , then \( \alpha_k(s) = x_k \). In other words, \( \{\alpha_k(s)\}_{k=1}^n \)  is \textit{the system of direction cosines} of the outward normal \( n(s) \) to the sphere \( S_n \) at the point \( s \).

The system \( \{\alpha_k(s)\}_{k=1}^n \) is orthogonal. Moreover, the value of the integral \(\int_{S_n} \alpha_k^2(s) ds\) does not depend on \(k\). More precisely, the equality holds
\begin{equation}
    \label{Eq:Norm}
\int\limits_{S_n} \alpha_k^2(s) ds = \frac{1}{n}, \quad 1 \leq k \leq n.
\end{equation}

 For the system of direction cosines, the following theorems established by S. B. Stechkin in his work \cite{Stechkin1961} are true.

\begin{theorem}
\label{T:1}
     Let $0<p_0<p$. Then for any numbers $a_1,a_2,...,a_n$ we have the equality
\begin{equation}
\left\|\sum\limits_{k=1}^n a_k\alpha_k\right\|_{p}= \frac{K_p (n)}{K_{p_0} (n)} \left\|\sum\limits_{k=1}^n a_k\alpha_k\right\|_{p_0},
\end{equation}
 where for any $r>0$
  \begin{equation}
  K_r (n)=\left( \frac{\Gamma(\frac{r+1}{2}) \Gamma(\frac{n}{2})}{\sqrt{\pi} \Gamma (\frac{n+r}{2})}\right)^\frac{1}{r}.
  \end{equation}
\end{theorem}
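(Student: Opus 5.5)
The plan is to use the rotation invariance of the normalized surface measure $ds$ on $S_n$ to reduce every linear combination to a multiple of a single coordinate function, and then to compute the $L^r$-norm of one coordinate explicitly.

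\emph{Reduction to one coordinate.} Fix $a=(a_1,\dots,a_n)\neq 0$; if $a=0$ both sides vanish and there is nothing to prove. For $s\in S_n$ we have $\sum_k a_k\alpha_k(s)=\langle a,s\rangle$. Choose $U\in O(n)$ with $U^{T}a=\|a\|_2 e_1$. The measure $ds$ is invariant under $O(n)$, so for every $r>0$
\[
\int\limits_{S_n}\Big|\sum_{k=1}^n a_k\alpha_k(s)\Big|^r ds=\int\limits_{S_n}|\langle a,Us\rangle|^r ds=\|a\|_2^r\int\limits_{S_n}|\alpha_1(s)|^r ds .
\]
Hence $\|\sum a_k\alpha_k\|_r=\|a\|_2\,\|\alpha_1\|_r$. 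The ratio of the $L^p$ and $L^{p_0}$ norms is therefore $\|\alpha_1\|_p/\|\alpha_1\|_{p_0}$, which does not depend on $a$. It remains to prove that $\|\alpha_1\|_r=K_r(n)$ for every $r>0$.

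\emph{Computing $\|\alpha_1\|_r$.} Under the normalized measure, the first coordinate $t=x_1$ of a uniform point on $S_n$ has density $c_n(1-t^2)^{(n-3)/2}$ on $[-1,1]$ when $n\ge2$. This follows from slicing the sphere into $(n-2)$-spheres of radius $\sqrt{1-t^2}$, together with the arc-length factor $(1-t^2)^{-1/2}$. Substituting $u=t^2$ gives
\[
\int\limits_{S_n}|\alpha_1|^r ds=\frac{\int_0^1 u^{\frac{r-1}{2}}(1-u)^{\frac{n-3}{2}}\,du}{\int_0^1 u^{-\frac12}(1-u)^{\frac{n-3}{2}}\,du}=\frac{B\big(\frac{r+1}{2},\frac{n-1}{2}\big)}{B\big(\frac12,\frac{n-1}{2}\big)} .
\]
Expanding both beta functions in gamma functions and using $\Gamma(1/2)=\sqrt{\pi}$, this equals $\Gamma(\frac{r+1}{2})\Gamma(\frac n2)/(\sqrt{\pi}\,\Gamma(\frac{n+r}{2}))$, which is $K_r(n)^r$.

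As a check, $r=2$ gives $\Gamma(3/2)\Gamma(n/2)/(\sqrt\pi\,\Gamma(n/2+1))=1/n$, in agreement with \eqref{Eq:Norm}. The case $n=1$ is handled directly: $S_1=\{\pm1\}$, so $\|\alpha_1\|_r=1=K_r(1)$.

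An alternative to the slicing argument is to integrate $|x_1|^r e^{-|x|^2}$ over $\mathbb{R}^n$ in polar coordinates, using the area formula \eqref{Eq:Square}. This also yields the same gamma-function ratio.

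The main obstacle is the normalization in the density computation. I will fix it by requiring the $r=0$ integral to equal $1$, which is exactly what dividing by the second beta function does.
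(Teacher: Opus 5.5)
Your argument is correct and follows essentially the paper's route. The paper only cites Stechkin for this theorem, but its proof of Theorem~\ref{T:3} proceeds ``by analogy'' through the same rotation-invariance reduction (Lemma~\ref{Lem:1}), and its formula \eqref{Eq:6} is exactly your marginal density $(1-t^2)^{(n-3)/2}$, from which $\|\alpha_n\|_r^r=B\big(\tfrac{r+1}{2},\tfrac{n-1}{2}\big)/B\big(\tfrac12,\tfrac{n-1}{2}\big)=K_r(n)^r$. The differences are minor: you get the density by slicing and fix the constant by total mass one, whereas the paper derives \eqref{Eq:6} from the graph parametrization of the hemisphere and Liouville's formula with the constant computed from $\mu_n$; you also treat $n=1$ separately, which \eqref{Eq:6} does not cover.
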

\begin{theorem}
\label{T:2}
Let \(p > 2, n\in \mathbb{N}\) and $(E,\mathcal A,\mu)$ be a probability space. Suppose that for $f_k \in L^p(E) , k=1,2,...,n$, and  any numbers \(a_1, a_2, \ldots, a_n\) the inequality
\[
\left\|\sum_{k=1}^n a_k f_k \right\|_{p} \leq M_p(n) \left\|\sum_{k=1}^n a_k f_k \right\|_{2},
\]
holds. Then
\[
M_p(n) \geq \frac{K_p(n)}{K_2(n)} = K_p(n) \sqrt{n}.
\]
\end{theorem}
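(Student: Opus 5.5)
The plan is to average the hypothesis over all coefficient vectors on the unit sphere and use the rotation invariance of the normalized measure $ds$ on $S_n$. Throughout I assume, as in the definition of an $S_p(M)$-lacunary system, that $f_1,\dots,f_n$ are linearly independent.

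\emph{Step 1 (normalization).} The hypothesis concerns only the linear span of the $f_k$. Applying Gram--Schmidt in $L^2(E)$ replaces the system by an orthonormal system $g_1,\dots,g_n$ with the same span. For any $a\in\mathbb{R}^n$ the combination $\sum a_k g_k$ is again some combination $\sum b_k f_k$, so the same inequality holds with the same constant $M_p(n)$:
\[
\Big\|\sum_{k=1}^n a_k g_k\Big\|_p\le M_p(n)\Big(\sum_{k=1}^n a_k^2\Big)^{1/2}.
\]
Write $G(t)=(g_1(t),\dots,g_n(t))\in\mathbb{R}^n$. Orthonormality gives $\int_E |G(t)|^2\,d\mu=n$, where $|\cdot|$ is the Euclidean norm.

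\emph{Step 2 (averaging over the sphere).} For $a\in S_n$ the inequality reads $\int_E|\langle a,G(t)\rangle|^p\,d\mu(t)\le M_p(n)^p$. Integrate this over $a\in S_n$ with respect to $ds$ and apply Fubini (everything is nonnegative). This yields
\[
\int_E\int_{S_n}|\langle a,G(t)\rangle|^p\,ds(a)\,d\mu(t)\le M_p(n)^p .
\]
By the $O(n)$-invariance of $ds$, for every fixed $v\in\mathbb{R}^n$ we have $\int_{S_n}|\langle a,v\rangle|^p\,ds(a)=|v|^p\int_{S_n}|\alpha_1(s)|^p\,ds$. The one computation needed is
\[
\int_{S_n}|\alpha_1|^p\,ds=K_p(n)^p ,
\]
which is the standard beta-integral evaluation of $\int_{S_n}|x_1|^r\,ds$. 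It is the same computation behind Theorem~\ref{T:1}, and for $r=2$ it gives $K_2(n)=1/\sqrt n$, consistent with \eqref{Eq:Norm}. Hence
\[
K_p(n)^p\int_E|G(t)|^p\,d\mu\le M_p(n)^p .
\]

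\emph{Step 3 (Jensen).} Since $p>2$ and $\mu$ is a probability measure, Hölder's (or Jensen's) inequality gives
\[
\int_E|G|^p\,d\mu\ge\Big(\int_E|G|^2\,d\mu\Big)^{p/2}=n^{p/2}.
\]
Combining this with Step 2 gives $M_p(n)\ge K_p(n)\sqrt n=K_p(n)/K_2(n)$, as claimed.

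The main obstacle is the identification of the spherical average with $K_p(n)^p$. I would take it from the explicit formula, writing the integral in polar form $\int_{S_n}|x_1|^p\,ds=\frac{\mu_{n-1}}{\mu_n}\int_0^\pi|\cos\theta|^p\sin^{n-2}\theta\,d\theta$ and evaluating it with \eqref{Eq:Square} and the beta function. If independence were not assumed, one would reduce to the independent part of dimension $m<n$. That case would then also require the monotonicity $K_p(m)\sqrt m\le K_p(n)\sqrt n$, which I would avoid by relying on the independence assumption.
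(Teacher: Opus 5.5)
Your argument is correct. It is the same argument the paper gives for its Orlicz generalization, Theorem~\ref{T:4}: orthonormalize, average the hypothesis over $a\in S_n$ using the rotation invariance of Lemma~\ref{Lem:1}, then apply Jensen to the convex function $u\mapsto\Phi(\sqrt u)$. For $\Phi(u)=|u|^p$ this is exactly your Steps 1--3; the paper itself only cites Stechkin for this statement.

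One correction to your closing aside: linear independence is genuinely necessary, not a convenience. Taking $f_1=\dots=f_n\equiv 1$ satisfies the hypothesis with $M_p(n)=1<K_p(n)\sqrt n$ for $n\ge 2$. The monotonicity $K_p(m)\sqrt m\le K_p(n)\sqrt n$, which does hold, points the wrong way to rescue the dependent case.
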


\section{The System of Direction Cosines in Orlicz Spaces}

We will show that Theorems \ref{T:1} and \ref{T:2} also remain valid in the case of Orlicz spaces,
which are a generalization of $L^p$-spaces.
First, we prove an auxiliary lemma.

\begin{lemma}
\label{Lem:1}
Let $S_{n} \subset \mathbb R^n$ be the unit sphere,
$ds$ be the normalized surface measure on $S_{n}$, which is
invariant under the orthogonal transformation group $O(n)$.
Let $\Theta \colon \mathbb R \to \mathbb R$ be a measurable function such that for each $a \in \mathbb R^n$
\[
I(a)
=
\int\limits_{S_{n}} \big| \Theta( \langle a,s \rangle ) \big| \,ds < \infty.
\]
Then the value $I(a)$ depends only on the length $\| a \|_2$ of the vector $a$, and not on its direction.
Specifically, for any $a\in\mathbb R^n$ we have the equality
\begin{equation}
\label{Eq:Theta}
I(a)
=
I ( \| a \|_2 e_n )
=
\int\limits_{S_{n}}
\Theta \big( \| a \|_2 \alpha_n (s) \big) \, ds,
\end{equation}
where $e_n = (0,\dots,0,1)$ is the standard basis vector.
\end{lemma}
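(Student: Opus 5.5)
The plan is to use the rotation invariance of the normalized surface measure $ds$ on $S_n$. The case $a=0$ is trivial: both sides of \eqref{Eq:Theta} equal $\Theta(0)$, or $|\Theta(0)|$ when the absolute value is kept, since $\int_{S_n} ds = 1$. So assume $a\neq 0$ and put $r=\|a\|_2>0$. Then $a/r$ is a unit vector, and there is an orthogonal transformation $U\in O(n)$ with $U^{T}(a/r)=e_n$, that is, $a=rUe_n$. Concretely, one can complete $a/r$ to an orthonormal basis of $\mathbb R^n$ and let $U$ be the matrix whose columns are this basis, with $a/r$ placed as the last column.

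For every $s\in S_n$ this gives
\[
\langle a,s\rangle = r\langle Ue_n,s\rangle = r\langle e_n,U^{T}s\rangle = r\,\alpha_n(U^{T}s).
\]
The map $s\mapsto U^{T}s$ is a bijection of $S_n$ onto itself, and the normalized measure $ds$ is invariant under it. Hence, for any measurable $g$ that is integrable or nonnegative, $\int_{S_n} g(U^{T}s)\,ds=\int_{S_n} g(s)\,ds$. Applying this with $g(s)=|\Theta(r\alpha_n(s))|$ shows that $I(a)=I(re_n)$. In particular, the finiteness assumption transfers, so $\Theta(r\alpha_n(\cdot))$ is integrable. Applying the same invariance with $g=\Theta(r\alpha_n(\cdot))$ itself then gives the second equality in \eqref{Eq:Theta}, which concerns the integral without the absolute value. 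From this point on the statement should be read for both $\Theta$ and $|\Theta|$; the proof is identical in the two cases.

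Two measure-theoretic points need checking. First, $s\mapsto \Theta(\langle a,s\rangle)$ is measurable, because it is the composition of a continuous function with a Borel measurable function. If "measurable" for $\Theta$ is meant in the Lebesgue sense, one either replaces $\Theta$ by a Borel modification or notes that $\langle a,\cdot\rangle$ pushes $ds$ forward to an absolutely continuous law on $[-r,r]$. Second, the invariance of $ds$ under $O(n)$ is given in the hypothesis of the lemma. It can also be derived from the fact that the surface measure is the pushforward, under radial projection, of Lebesgue measure restricted to the unit ball, and that Lebesgue measure is invariant under orthogonal maps.

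The main obstacle is only this bookkeeping: justifying measurability of $\Theta\circ\langle a,\cdot\rangle$ and applying the change-of-variables identity for $U^{T}$ correctly. The core of the argument is a single rotation.
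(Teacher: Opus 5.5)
Your proof is correct and is essentially the paper's argument: choose an orthogonal matrix carrying $a/\|a\|_2$ to $e_n$, then apply the $O(n)$-invariance of $ds$ in a single change of variables. Your extra remarks go slightly beyond the paper, which leaves them implicit: the trivial case $a=0$, the reading of the second equality in \eqref{Eq:Theta} for $\Theta$ rather than $|\Theta|$, and measurability.
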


\begin{proof}
Assume $a \ne 0$.
Since every nonzero point of $\mathbb R^n$ can be carried by an orthogonal transformation into $e_n$, there exists an orthogonal matrix $Q\in O(n)$ such that $Q^T a=\|a\|_2 e_n$.
Performing the change of variables $s=Q t$ and using invariance of the measure $ds$ under the action $Q$, we have $ds (s) = ds(t)$ and the domain of integration remains $S_{n}$. Then
\begin{align*}
\int\limits_{S_{n}} \Theta( \langle a,s \rangle ) \, ds
&= \int\limits_{S_{n}} \Theta \big( \langle a, Q t \rangle \big) \, ds(t)
= \int\limits_{S_{n}} \Theta\big( \langle  Q^T a, t \rangle \big)\,ds(t) \\
&= \int\limits_{S_{n}} \Theta \big( \langle \|a\|_2 e_n, t \rangle \big) \, ds(t)
= \int\limits_{S_{n}} \Theta \big( \|a\|_2 \, \alpha_n (t) \big)\,ds(t),
\end{align*}
which proves equality \eqref{Eq:Theta}.
\end{proof}

\begin{theorem}
\label{T:3}
Let $\Phi$ be an $N$-function. Then for any numbers $a_1,\dots,a_n$ the following holds
\begin{equation}
\label{Eq:Khintchine}
\left\| \sum_{k=1}^n a_k \alpha_k \right\|_{ L_{\Phi}(S_n) }
=
K_{{\Phi},n} \sqrt{n} \left\| \sum_{k=1}^n a_k \alpha_k \right\|_{ L^2(S_n) },
\end{equation}
where
\begin{equation}
\label{Eq:Khintchine-2}
K_{{\Phi},n} = \| \alpha_n \|_{ L_{\Phi}(S_{n}) }.
\end{equation}
\end{theorem}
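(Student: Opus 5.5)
The plan is to reduce both sides of \eqref{Eq:Khintchine} to functions of the single number $\|a\|_2$, where $a=(a_1,\dots,a_n)$, and then compare them. Write $f_a(s)=\sum_{k=1}^n a_k\alpha_k(s)=\langle a,s\rangle$. If $a=0$ both sides vanish, so we assume $a\neq 0$.

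\textbf{The $L^2$ side.} By orthogonality of the direction cosines and \eqref{Eq:Norm},
\[
\|f_a\|_{L^2(S_n)}^2=\sum_{k=1}^n a_k^2\int\limits_{S_n}\alpha_k^2\,ds=\frac{\|a\|_2^2}{n}.
\]
Hence $\sqrt n\,\|f_a\|_{L^2}=\|a\|_2$, and the right-hand side of \eqref{Eq:Khintchine} equals $K_{\Phi,n}\|a\|_2$.

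\textbf{The $L_\Phi$ side.} Fix $\lambda>0$ and apply Lemma \ref{Lem:1} with $\Theta(u)=\Phi(u/\lambda)$. This $\Theta$ is continuous, hence measurable, and even and nonnegative, so $|\Theta|=\Theta$. The integrability hypothesis holds because $|\langle a,s\rangle|\le\|a\|_2$ on $S_n$ and $\Phi$ is bounded on bounded sets. The lemma then gives
\[
\int\limits_{S_n}\Phi\Big(\frac{|f_a(s)|}{\lambda}\Big)ds=\int\limits_{S_n}\Phi\Big(\frac{\|a\|_2\,|\alpha_n(s)|}{\lambda}\Big)ds
\]
for every $\lambda>0$. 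Consequently the sets of admissible $\lambda$ in the Luxemburg infimum coincide for $f_a$ and for $\|a\|_2\alpha_n$. Therefore
\[
\|f_a\|_{L_\Phi}=\big\|\,\|a\|_2\alpha_n\big\|_{L_\Phi}=\|a\|_2\,\|\alpha_n\|_{L_\Phi}=\|a\|_2K_{\Phi,n},
\]
using homogeneity of the Luxemburg norm (substitute $\lambda=\|a\|_2\mu$). This agrees with the $L^2$ side, which proves \eqref{Eq:Khintchine}.

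\textbf{Remaining check.} The only point needing care is that $K_{\Phi,n}$ is a finite positive number. It is positive because $\alpha_n\neq 0$ on a set of positive measure. It is finite because $|\alpha_n|\le 1$, so $\int\Phi(|\alpha_n|/\lambda)\,ds\le\Phi(1/\lambda)$. Since $\Phi(u)/u\to 0$ as $u\to 0$, we have $\Phi(1/\lambda)\to 0$ as $\lambda\to\infty$, so this bound is at most $1$ for large $\lambda$. No step here is a real obstacle; the argument rests on matching the Luxemburg admissible sets through Lemma \ref{Lem:1}.
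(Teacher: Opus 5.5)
Your proposal is correct and follows essentially the same route as the paper. Lemma~\ref{Lem:1}, applied to $\Phi(\cdot/\lambda)$, matches the Luxemburg admissible sets for $\sum_k a_k\alpha_k$ and $\|a\|_2\alpha_n$; rescaling $\lambda$ then gives $\|a\|_2K_{\Phi,n}$, and orthogonality with \eqref{Eq:Norm} gives $\sqrt n\,\|\sum_k a_k\alpha_k\|_{L^2}=\|a\|_2$, while your extra checks (the case $a=0$, the lemma's integrability hypothesis, finiteness and positivity of $K_{\Phi,n}$) are details the paper leaves implicit.
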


\begin{proof}
By analogy with the proof of Theorem \ref{T:1} we have
\[
\int\limits_{S_n}
{\Phi} \left( \frac{\sum_{k=1}^n a_k \alpha_k(s)}{\lambda} \right) ds
=
\int\limits_{S_n} {\Phi} \left( \frac{\langle a,n(s)\rangle}{\lambda} \right) ds.
\]
Applying Lemma~\ref{Lem:1} to the integral on the right and computing the norm on the left in \eqref{Eq:Khintchine}, we obtain
\begin{equation}
\label{Eq:Khintchine-11}
\begin{split}
\left\| \sum_{k=1}^n a_k \alpha_k \right\|_{ L_{\Phi}(S_n) }
&
=
\inf \left\{ \lambda>0 \colon \int\limits_{S_n}
{\Phi} \left( \frac{||a||_2 \langle e_n,n(t)\rangle}{\lambda} \right) ds(t) \leq 1 \right\}
\\
&
=
\inf \left\{ \lambda>0 \colon \int\limits_{S_n}
{\Phi} \left( \frac{\langle e_n,n(s)\rangle}{\lambda / ||a||_2} \right) ds \leq 1\right\}
\\
&
=
\| a \|_2 \inf \left\{ \lambda>0 \colon
\int\limits_{S_n} {\Phi} \left( \frac{\langle e_n,n(s) \rangle}{\lambda} \right) ds \leq 1 \right\}
\\
&
=
\left( \sum_{k=1}^n a_k^2 \right)^{1/2} \| \alpha_n \|_{L_{\Phi}(S_n)}.
\end{split}
\end{equation}
Furthermore, from the orthogonality of the system \( \{\alpha_k \}_{k=1}^n \)
and equality \eqref{Eq:Norm} it follows that
\begin{equation}
\label{Eq:Khintchine-12}
\left\| \sum_{k=1}^n a_k \alpha_k \right\|_{L^2 (S_n)}^2
=
\sum_{k=1}^n a_k^2 \left\| \alpha_k \right\|_{L^2 (S_n)}^2
=
\frac{1}{n} \sum_{k=1}^n a_k^2.
\end{equation}
The expressions \eqref{Eq:Khintchine-11} and \eqref{Eq:Khintchine-12}
imply \eqref{Eq:Khintchine}, as it was to be shown.
\end{proof}

It is instructive to compute the constant $K_{\Phi,n}$ for some $N$-functions ${\Phi}$.
First of all, we derive a formula for the integral $\int\limits_{S_n} F ( |x_n| ) ds$,
where $F \colon [-1,1] \to \mathbb R$ is an even function. We split $S_n$ into the upper and lower hemispheres
\[
S^{\pm}_n = \left\{ (x_1,\ldots,x_n) \in S_n \colon x_n = \pm \left( 1-\sum\limits_{i=1}^{n-1} x_i^2 \right)^{\frac{1}{2}} \right\}
\]
and parametrize the upper hemisphere by
\[
X(x_1,\dots,x_{n-1})=\big(x_1,\dots,x_{n-1},\,x_n(x_1,\dots,x_{n-1})\big),\qquad
x_n(x_1,\dots,x_{n-1})=\sqrt{1-\sum_{i=1}^{n-1}x_i^2},
\]
where
\[
(x_1,\dots,x_{n-1})\in U_n=\big\{(x_1,\dots,x_{n-1}):\ \sum_{i=1}^{n-1} x_i^2\le 1\big\}.
\]
The metric tensor has components
\[
g_{ij}=\big\langle X_{x_i},X_{x_j}\big\rangle
=\delta_{ij}+\frac{\partial x_n}{\partial x_i}\frac{\partial x_n}{\partial x_j},
\]
where
\[
\frac{\partial x_n}{\partial x_i}=\frac{1}{2} \Big( 1-\sum\limits_{i=1}^{n-1} x_i^2\Big)^{-\frac{1}{2}} 2x_i=-\frac{x_i}{x_n}.
\]
Since the matrix $G=(g_{ij})$ has the form $G=I_{n-1}+v v^\top$ with
$v=(\partial x_n/\partial x_1,\dots,\partial x_n/\partial x_{n-1})^\top,$ we obtain
\[
\det(g_{ij})=1+\|v\|^2
=1+\sum_{i=1}^{n-1}\Big(\frac{x_i}{x_n}\Big)^2=\frac{x_n^2+\sum_{i=1}^{n-1}x_i^2}{x_n^2}=\frac{1}{x_n^2}.
\]
Consequently, the surface element
\[
d\sigma=\sqrt{\det(g_{ij})}\;dx_1\cdots dx_{n-1}=\frac{1}{x_n}\;dx_1\cdots dx_{n-1}
\]
and the normalized surface measure
\[
ds=\frac{d\sigma}{\mu_n},\qquad \mu_n=\frac{2\pi^{n/2}}{\Gamma(n/2)}.
\]
In particular, for an even function F
\begin{equation}
\label{Eq:F}
\int\limits_{S_n}F(|x_n|)\,ds=\frac{2}{\mu_n}\int\limits_{U_n}F\big(x_n(x_1,\ldots,x_{n-1})\big)\,\frac{1}{x_n(x_1,\ldots,x_{n-1})}\,dx_1\cdots dx_{n-1}.
\end{equation}
is true. Note that
\[
\int\limits_{U_n} \frac{F(x_n(x_1,\ldots,x_{n-1}))}{x_n(x_1,\ldots,x_{n-1})} \, dx_1\dots dx_{n-1} = 2^{n-1} \int\limits_{U_n^+} \frac{F\big(x_n(x_1,\dots,x_{n-1})\big)}{x_n(x_1,\dots,x_{n-1})} \, dx_1\dots dx_{n-1},
\]
where
\[
U_n^+ = \{ (x_1,\dots,x_{n-1}) : x_i \ge 0,\; \sum_{i=1}^{n-1} x_i^2 \le 1 \}.
\]
Perform the change $y_i = x_i^2, \quad dx_i = \frac{1}{2\sqrt{y_i}} dy_i $, so that
\begin{multline*}
\int\limits_{U_n^+} \frac{F(x_n(x_1,\ldots,x_{n-1}))}{x_n(x_1,\ldots,x_{n-1})} \, dx_1\dots dx_{n-1} = \\
=\int\limits_{\sum y_i \le 1} \frac{F\bigl(\sqrt{1 - \sum_{i=1}^{n-1} y_i}\bigr)}{\sqrt{1 - \sum_{i=1}^{n-1} y_i}} \cdot
\frac{1}{2^{n-1} \sqrt{y_1 \dots y_{n-1}}} \, dy_1 \dots dy_{n-1}.
\end{multline*}
Multiplying by $2^{n-1}$, we obtain
\begin{multline*}
\int\limits_{U_n} \frac{F(x_n(x_1,\ldots,x_{n-1}))}{x_n(x_1,\ldots,x_{n-1})} \, dx_1\dots dx_{n-1} = \\
=\int\limits_{\sum y_i \le 1} \frac{F\bigl(\sqrt{1 - \sum_{i=1}^{n-1} y_i}\bigr)}{\sqrt{1 - \sum_{i=1}^{n-1} y_i}} \cdot
\frac{1}{\sqrt{y_1 \dots y_{n-1}}} \, dy_1 \dots dy_{n-1}.
\end{multline*}
We use Liouville's formula 
\[
\int\limits_F f\Bigl(\sum_{i=1}^m y_i\Bigr) \prod_{i=1}^m y_i^{p_i-1} \, dy_1\dots dy_m =
\frac{\prod_{i=1}^m \Gamma(p_i)}{\Gamma\bigl(\sum_{i=1}^m p_i\bigr)} \int\limits_0^1 f(r) \, r^{\sum p_i - 1} \, dr.
\]
for the simplex $F = \{ y \ge 0 : \sum_{i=1}^m y_i \le 1 \}$. In our case $m = n-1$, $p_i = \frac{1}{2}$,
\[
\int\limits_{U_n} \frac{F(x_n(x_1,\ldots,x_{n-1}))}{x_n(x_1,\ldots,x_{n-1})} \, dx_1\dots dx_{n-1} = \frac{\prod_{i=1}^{n-1} \Gamma(1/2)}{\Gamma\bigl((n-1)/2\bigr)} \int\limits_0^1
\frac{F\bigl(\sqrt{1 - r}\bigr)}{\sqrt{1 - r}} \, r^{(n-1)/2 - 1} \, dr.
\]
\[
= \frac{\pi^{(n-1)/2}}{\Gamma\bigl((n-1)/2\bigr)} \int\limits_0^1
\frac{F\bigl(\sqrt{1 - r}\bigr)}{\sqrt{1 - r}} \, r^{(n-3)/2} \, dr.
\]
With the substitution $t = \sqrt{1 - r}$, $r = 1 - t^2$, $dr = -2t \, dt$, the variable $t: 1 \to 0$ as $r: 0 \to 1$. Then
\[
\int\limits_{U_n} \frac{F(x_n(x_1,\ldots,x_{n-1}))}{x_n(x_1,\ldots,x_{n-1})} \, dx_1\dots dx_{n-1}  = \frac{2\pi^{(n-1)/2}}{\Gamma\bigl((n-1)/2\bigr)} \int\limits_0^1 F(t) (1 - t^2)^{(n-3)/2} \, dt.
\]
Substituting the previous expression in \eqref{Eq:F} and taking into account that $\mu_n = \dfrac{2\pi^{n/2}}{\Gamma(n/2)}$, we get
\begin{equation}
\label{Eq:6}
\int\limits_{S_n} F(|x_n|)\,ds
= \frac{2\,\Gamma\!\big(\tfrac n2\big)}{\sqrt{\pi}\,\Gamma\!\big(\tfrac{n-1}{2}\big)}
\int\limits_{0}^{1} F(t)\,\big(1-t^2\big)^{\frac{n-3}{2}} dt.
\end{equation}

\newpage
\begin{example}
\label{Ex:1}
${\Phi}_1(u)=e^{u^2}-1$. 
\end{example} By formula \eqref{Eq:6},
where \( F(t) = e^{t^2/\lambda^2} - 1 \), we obtain
\begin{align*}
I(\lambda) :=
&
\frac{2\Gamma\left(\frac{n}{2}\right)}{\sqrt\pi\,\Gamma\!\left(\frac{n-1}{2}\right)}
\int\limits_0^1
\big(e^{t^2/\lambda^2}-1\big) \, (1-t^2)^{\frac{n-3}{2}} \, dt
\\
=
&
\frac{\Gamma(\tfrac n2)}{\sqrt\pi\,\Gamma(\tfrac{n-1}{2})}
\int\limits_0^1\big(e^{u/\lambda^2}-1\big)u^{-1/2}(1-u)^{\frac{n-3}{2}}\,du
\\
= &
\frac{\Gamma(\tfrac n2)}{\sqrt\pi\,\Gamma(\tfrac{n-1}{2})} \Big(\int\limits_0^1 e^{u/\lambda^2} u^{-1/2}(1-u)^{\frac{n-3}{2}}\,du
- \int\limits_0^1 u^{-1/2}(1-u)^{\frac{n-3}{2}}\,du \Big).
\end{align*}
The second integral on the right is expressed as
\[
\int\limits_0^1 u^{-1/2}(1-u)^{\frac{n-3}{2}}\,du
= B\Big(\frac{1}{2},\frac{n-1}{2}\Big)
= \frac{\Gamma(\tfrac12)\,\Gamma(\tfrac{n-1}{2})}{\Gamma(\tfrac n2)}.
\]
To transform the first one, we use the formula
\[
\int\limits_0^1 u^{a-1}(1-u)^{b-1} e^{z u}\,du = B(a,b)\;{}_1F_1(a;a+b;z).
\qquad
\]
Here we set
\[
a = \dfrac12, \quad b = \dfrac{n-1}{2}, \quad z = \dfrac{1}{\lambda^2}.
\]
Then
\[
\int\limits_0^1 e^{u/\lambda^2} u^{-1/2}(1-u)^{\frac{n-3}{2}}\,du
= B\Big(\tfrac12,\tfrac{n-1}{2}\Big)\;{}_1F_1\!\Big(\tfrac12;\tfrac n2;\tfrac{1}{\lambda^2}\Big).
\]
Combining the results, we obtain
\[
I(\lambda)=\frac{\Gamma(\tfrac n2)}{\sqrt\pi\,\Gamma(\tfrac{n-1}{2})} \Big( \frac{\Gamma(\tfrac12)\Gamma(\tfrac{n-1}{2})}{\Gamma(\tfrac n2)} {}_1F_1\Big(\tfrac12;\tfrac n2;\tfrac{1}{\lambda^2}\Big)-\frac{\Gamma(\tfrac12)\,\Gamma(\tfrac{n-1}{2})}{\Gamma(\tfrac n2)} \Big)
= \,{}_1F_1\Big(\tfrac12;\tfrac n2;\tfrac{1}{\lambda^2}\Big)-1.
\]
The constant \( K_{{\Phi}_1,n} \) is determined from the equation \( I( K_{{\Phi}_1,n} ) = 1 \),
which in our case implies
\[
{}_1 F_1 \Big(\tfrac12;\tfrac n2;\tfrac{1}{K_{{\Phi}_1,n}^2}\Big) = 2
\]
It remains to note that the function \( {}_1F_1(\tfrac12;\tfrac n2;z) \) increases monotonically
for $z>0$, therefore the root of the equation \( I( K_{{\Phi}_1,n} ) = 1 \) is unique.

The alternative equation that determines the constant $K_{{\Phi}_1,n}$ can be presented.
Expanding the function
\[
e^{u/\lambda^2}-1
=
\sum_{m=1}^{\infty}
\frac{u^m}{m!\,\lambda^{2m}}
\]
and substituting into \eqref{Eq:6}, we obtain
\[
I(\lambda)
=
\frac{\Gamma\left(\tfrac n2\right)}{\sqrt{\pi}\,
\Gamma\left(\tfrac{n-1}{2}\right)}
\sum_{m=1}^{\infty}
\frac{1}{m!\,\lambda^{2m}}
\int\limits_0^1
u^{m-\frac12}(1-u)^{\frac{n-3}{2}}\,du,
\]
where the integral is expressed as
\[
\int\limits_0^1
u^{m-\frac12}(1-u)^{\frac{n-3}{2}}\,du
=
B\left(m+\tfrac12,\tfrac{n-1}{2}\right)
=
\frac{\Gamma\!\left(m+\tfrac12\right)
\Gamma\left(\tfrac{n-1}{2}\right)}
{\Gamma\left(m+\tfrac n2\right)}.
\]
Finally, the value $K_{{\Phi}_1,n}$
is determined by the equality
\[
\sum_{m=1}^{\infty}
\frac{\Gamma\left(\tfrac n2\right)\,
\Gamma\left(m+\tfrac12\right)}
{\sqrt{\pi}\,m!\,
\Gamma\left(m+\tfrac n2\right)}
\,K_{{\Phi}_1,n}^{-2m}
=
1.
\]

\begin{example}
\label{Ex:2}
 \( {\Phi}_2(u)=\cosh u -1\).
\end{example}
We can apply formula \eqref{Eq:6},
where \(F(t)=\cosh(t/\lambda)-1\):
\[
I(\lambda)=\frac{2\,\Gamma(\tfrac n2)}{\sqrt\pi\,\Gamma(\tfrac{n-1}{2})}
\int\limits_0^1\big(\cosh(t/\lambda)-1\big)(1-t^2)^{\frac{n-3}{2}}\,dt.
\]
After expanding the integrand in a Taylor series we get:
\[
I(\lambda)=\frac{2\,\Gamma(\tfrac n2)}{\sqrt\pi\,\Gamma(\tfrac{n-1}{2})}
\sum_{m=1}^\infty \frac{1}{(2m)!\,\lambda^{2m}}
\int\limits_0^1 t^{2m}(1-t^2)^{\frac{n-3}{2}}\,dt.
\]
The above integral can be reduced to the beta function through the substitution \(u=t^2\)
\[
\int\limits_0^1 t^{2m}(1-t^2)^{\frac{n-3}{2}}\,dt
=\frac12\int\limits_0^1 u^{m-\tfrac12}(1-u)^{\frac{n-3}{2}}\,du
=\frac12 B\Big(m+\frac12,\frac{n-1}{2}\Big).
\]
Then
\[
I(\lambda)=\frac{\Gamma(\tfrac n2)}{\sqrt\pi\,\Gamma(\tfrac{n-1}{2})}
\sum_{m=1}^\infty \frac{B\big(m+\tfrac12,\tfrac{n-1}{2}\big)}{(2m)!\,\lambda^{2m}}=\sum_{m=1}^\infty \frac{\Gamma(\tfrac n2)\,\Gamma(m+\tfrac12)}
{\sqrt\pi\,(2m)!\,\Gamma(m+\tfrac n2)}\;\lambda^{-2m}.
\]
Using the relation
\[
(2m)! = 2^{2m} m! \,\Gamma\Big(m+\tfrac12\Big)\frac{1}{\sqrt\pi},
\]
let us simplify the coefficient and present it in the form
\[
I(\lambda)=\sum_{m=1}^\infty \frac{\Gamma(\tfrac n2)}{2^{2m} m!\,\Gamma(m+\tfrac n2)}\;\lambda^{-2m}
= \sum_{m=1}^\infty \frac{1}{m!\,(\tfrac n2)_m}\Big(\frac{1}{4\lambda^2}\Big)^m.
\]
By the definition of the generalized hypergeometric function we have
\[
I(\lambda)= {}_0F_1\Big(; \frac n2; \frac{1}{4\lambda^2}\Big)-1.
\]
The constant \(K_{{\Phi}_2,n}\) is determined from the condition \(I(K_{{\Phi}_2,n})=1\), namely from the equation
\[
{}_0F_1\!\Big(; \frac n2; \frac{1}{4K_{{\Phi}_2,n}^2}\Big)=2.
\]
Obviously, without reducing the sum to a hypergeometric function, we would obtain
\[
\sum_{m=1}^{\infty}
\frac{\Gamma\left(\tfrac n2\right)}
{2^{2m} m!\,
\Gamma\left(m+\tfrac n2\right)}
\,K_{{\Phi}_2,n}^{-2m}
=
1.
\]

\begin{theorem}
\label{T:4}
Let ${\Phi}$ be an $N$-function such that the function $\Psi(u) := {\Phi} (\sqrt u)$ is convex on $[0,\infty)$. Let $(E,\mathcal A,\mu)$ be a probability space and $\{f_k\}_{k=1}^n\subset L_{\Phi}(E)\cap L^2(E)$ be a system of linearly independent functions. Suppose there exists a constant $C$ such that for any numbers $a_1,\dots,a_n$ the inequality
\begin{equation}
\label{Eq:Khintchine-41}
\Big\| \sum_{k=1}^n a_k f_k \Big\|_{L_{\Phi}(E)}
\le
C \Big\| \sum_{k=1}^n a_k f_k \Big\|_{L_2(E)}
\end{equation}
holds.
Then $C \ge K_{{\Phi},n} \sqrt n$, where $K_{{\Phi},n}$ is defined by \eqref{Eq:Khintchine-2}.
\end{theorem}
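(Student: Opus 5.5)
We adapt Stechkin's averaging argument for Theorem~\ref{T:2}: integrate the hypothesis \eqref{Eq:Khintchine-41} over the sphere of coefficient vectors, and use the convexity of $\Psi$ to pass the average inside $\Phi$.

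\emph{Normalization.} First we orthonormalize. Replacing $\{f_k\}$ by an orthonormal basis $\{g_k\}_{k=1}^n$ of their span in $L^2(E)$ does not change the hypothesis, because \eqref{Eq:Khintchine-41} holds for every element of the span. So we may assume $\int_E g_j g_k\,d\mu=\delta_{jk}$. Then for every $a\in S_n$,
\[
\Big\|\sum_{k=1}^n a_k g_k\Big\|_{L_2(E)}=1 ,
\]
and the hypothesis gives $\|\sum a_k g_k\|_{L_\Phi(E)}\le C$. By the definition of the Luxemburg norm (the infimum is attained, by Fatou/monotone convergence), this means
\[
\int_E \Phi\Big(\frac{\langle a, g(x)\rangle}{C}\Big)\,d\mu(x)\le 1 \quad\text{for all } a\in S_n,
\]
where $g(x)=(g_1(x),\dots,g_n(x))$.

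\emph{Averaging over $a\in S_n$.} Next we integrate this inequality in $a$ with respect to $ds$ and apply Tonelli:
\[
\int_E\int_{S_n}\Phi\Big(\frac{\langle a,g(x)\rangle}{C}\Big)\,ds(a)\,d\mu(x)\le 1 .
\]
For fixed $x$, Lemma~\ref{Lem:1} evaluates the inner integral as
\[
h\big(\|g(x)\|_2^2\big),\qquad h(r):=\int_{S_n}\Phi\Big(\frac{\sqrt r\,\alpha_n(s)}{C}\Big)\,ds=\int_{S_n}\Psi\Big(\frac{r\,\alpha_n(s)^2}{C^2}\Big)\,ds .
\]
Since $\Psi$ is convex on $[0,\infty)$, the function $h$ is an average of convex functions of $r$ (each $r\mapsto \Psi(r c)$ with $c=\alpha_n(s)^2/C^2\ge 0$ is convex), so $h$ is convex on $[0,\infty)$. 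Jensen's inequality for the probability measure $\mu$ then gives
\[
1\ \ge\ \int_E h\big(\|g(x)\|_2^2\big)\,d\mu\ \ge\ h\Big(\int_E\|g\|_2^2\,d\mu\Big)=h(n),
\]
because $\int_E\sum_k g_k^2\,d\mu=n$ by orthonormality.

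\emph{Conclusion.} Unwinding the definition of $h$,
\[
h(n)=\int_{S_n}\Phi\Big(\frac{\alpha_n(s)}{C/\sqrt n}\Big)\,ds\le 1 .
\]
By the definition of the Luxemburg norm this yields $\|\alpha_n\|_{L_\Phi(S_n)}\le C/\sqrt n$, i.e. $C\ge K_{\Phi,n}\sqrt n$, which is the claim.

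\emph{Main obstacle.} The delicate step is the Jensen step: it is exactly where the convexity of $\Phi(\sqrt u)$ is needed, namely to make $h$ convex in the squared length $r=\|g(x)\|_2^2$. Convexity of $\Phi$ itself only gives convexity in $\sqrt r$, which is not enough because the available information is $\int_E \|g\|_2^2\,d\mu=n$, a constraint on the squares.

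A minor technical point is justifying the attainment of the Luxemburg infimum and the use of Tonelli. Both hold since $\Phi\ge 0$ is continuous; if $C$ is not attained, one can instead use $C+\varepsilon$ and let $\varepsilon\to 0$.
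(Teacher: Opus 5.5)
Your proof is correct and follows the paper's argument step for step: you orthonormalize, average the unit-vector inequality over the sphere, use Lemma~\ref{Lem:1} to reduce the inner integral to a function of $\|g(x)\|_2^2$, and then use convexity of $\Phi(\sqrt u)$ to apply Jensen with $\int_E\|g\|_2^2\,d\mu=n$. The only difference is the final step: you read $\|\alpha_n\|_{L_\Phi}\le C/\sqrt n$ directly off $h(n)\le 1$ via the Luxemburg infimum, which is slightly more direct than the paper's route through $J(1/K_{\Phi,n})=1$.
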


\begin{proof}
Let $\lbrace g_1,\ldots,g_n \rbrace$ be an orthonormal system
with the same linear span as the system $\lbrace f_1,\ldots,f_n \rbrace$.
Since the linear spans of the systems coincide,
any polynomial in the system $\{ f_k \}$
is a polynomial in the system $\{ g_k \}$.
Thus, in proving \eqref{Eq:Khintchine-41}
we can assume that the system $\lbrace f_k \rbrace$ is orthonormal.

For \(t\ge0\), define the function
\[
J(t):=\int\limits_{S_n} {\Phi}\bigl(t\alpha_n(s)\bigr)\,ds.
\]
For any vector \(a\in\mathbb R^n\) with \(\|a\|_2=1\) we have
\begin{equation}
\label{Eq:Khintchine-42}
\int\limits_E {\Phi}\Big(\frac{\sum_{k=1}^n a_k g_k(x)}{C}\Big)\,d\mu(x)\le 1.
\end{equation}
We choose  \(a=\phi(s)=(\phi_1(s),\dots,\phi_n(s))\), where \(s\in S_n\),
taking into account that
\( \sum_k \alpha_k(s) g_k(x) = \langle \alpha(s), g(x) \rangle \),
and integrate inequality \eqref{Eq:Khintchine-42} over \(s\in S_n\)
\[
\int\limits_{S_n}
\int\limits_E {\Phi}\Big( \frac{ \langle \alpha(s), g(x) \rangle }{C} \Big) \, d\mu(x) \,ds \le 1.
\]
The integral over $s$ depends only on $\| g(x) \|_{l_2} =: G(x)$ by Lemma~\ref{Lem:1}.
Therefore, changing the order of integration, we obtain
\begin{equation}
\label{Eq:Khintchine-43}
\int\limits_E J \Big(\frac{G(x)}{C}\Big)\,d\mu(x) \le 1.
\end{equation}
Let us introduce an auxiliary function \(H \colon [0,\infty)\to\mathbb R\) as
  \[
    H(u):=J(\sqrt u)=\int\limits_{S_n} {\Phi} ( \sqrt{u} \alpha_n (s) ) \, ds.
  \]
By assumption, for each fixed $s$ the integrand
is convex in \( u \in [0,\infty)\),
therefore $H$ is also convex. Noting that
\begin{equation}
\label{Eq:Khintchine-44}
\int\limits_E J\Big(\frac{G(x)}{C}\Big)\,d\mu(x) = \int\limits_E H\Big(\frac{G(x)^2}{C^2}\Big)\,d\mu(x),
\end{equation}
then taking into account that
\[
\int\limits_E G(x)^2\,d\mu(x) = \sum_{k=1}^n \int\limits_E g_k(x)^2\,d\mu(x) = n,
\]
and applying Jensen's inequality, we derive
\[
    \int\limits_E H\Big(\frac{G(x)^2}{C^2}\Big)\,d\mu(x) \ge H\Big(\int\limits_E \frac{G(x)^2}{C^2}\,d\mu(x)\Big)
    = H\Big(\frac{n}{C^2}\Big).
\]
Formulas \eqref{Eq:Khintchine-43} and \eqref{Eq:Khintchine-44} yield
  \[
    H\Big(\frac{n}{C^2}\Big) \le 1.
  \]
  Then
\begin{equation}
\label{Eq:Khintchine-45}
    J\Big(\frac{\sqrt n}{C}\Big) \le 1.
\end{equation}
Using  \eqref{Eq:Khintchine-45}, we deduce that
the set $S=\{\lambda>0 \colon J(1/\lambda)\le1\}$ is non-empty.
The function ${\Phi}$ is continuous and non-decreasing on \([0,\infty)\) due to the definition of an $N$-function,
consequently, the function \(J \) also is. Accordingly, the function \( J(1/\lambda) \) is continuous and non-increasing on \((0,\infty)\). Then the set $S$ has the form of an interval $[\inf S,\infty)$. But \( \inf S = K_{\Phi} \). Hence,
\begin{equation}
\label{Eq:Khintchine-46}
    J\Big(\frac{1}{K_{\Phi}}\Big) = 1.
\end{equation}
From \eqref{Eq:Khintchine-45} and \eqref{Eq:Khintchine-46} we conclude
  \[
    \frac{\sqrt n}{C} \le \frac{1}{K_{\Phi}},\qquad C \ge K_{\Phi}\sqrt n,
  \]
this completes the proof.
\end{proof}

\begin{corollary}
For any $n \in \mathbb{N}$, the system \( \{\alpha_k \}_{k=1}^n \)
is the best lacunary system in the Orlicz space $L_{\Phi}(S_n)$ with the constant
\[
C(\{\alpha_k\}) = K_{{\Phi},n} \sqrt{n}.
\]
\end{corollary}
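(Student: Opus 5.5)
The corollary should follow by combining Theorem~\ref{T:3} and Theorem~\ref{T:4}. The claim has two parts. First, the system $\{\alpha_k\}_{k=1}^n$ is lacunary in $L_{\Phi}(S_n)$ with constant exactly $K_{{\Phi},n}\sqrt n$. Second, no system of $n$ linearly independent functions on any probability space admits a smaller constant. Since the corollary is stated for arbitrary $n$ and implicitly under the standing hypothesis of Theorem~\ref{T:4}, I will assume throughout that $\Phi(\sqrt u)$ is convex on $[0,\infty)$, as the abstract indicates. Without that assumption only the first part is available.

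For the first part, I note that $\{\alpha_k\}$ lies in $L^2(S_n)\cap L_{\Phi}(S_n)$. Each $\alpha_k$ is bounded by $1$ and $\Phi$ is finite, so $\int_{S_n}\Phi(\alpha_k/\lambda)\,ds<\infty$ for every $\lambda>0$. The functions $\alpha_k$ are linearly independent, since they are orthogonal with nonzero norms by \eqref{Eq:Norm}. Theorem~\ref{T:3} then gives equality in \eqref{Eq:Khintchine-41} with $C=K_{{\Phi},n}\sqrt n$ for every coefficient vector $a$. Hence the inequality holds with this constant. Moreover, the constant cannot be lowered for this particular system: taking any $a\neq 0$, the ratio of the two norms equals $K_{{\Phi},n}\sqrt n$. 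I also need the constant to be positive and finite. Finiteness holds because $J(t)=\int_{S_n}\Phi(t\alpha_n)\,ds\le\Phi(t)\to0$ as $t\to 0$, so some $\lambda$ satisfies the defining inequality. Positivity holds because $\alpha_n\neq0$ on a set of positive measure.

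For optimality, I will take an arbitrary probability space $(E,\mathcal A,\mu)$ and any linearly independent $f_1,\dots,f_n\in L_{\Phi}(E)\cap L^2(E)$ satisfying \eqref{Eq:Khintchine-41} with some $C$. Theorem~\ref{T:4} then yields $C\ge K_{{\Phi},n}\sqrt n$. So the minimal constant over all such systems is at least $K_{{\Phi},n}\sqrt n$, and by the first step it is attained by the direction cosines. That is exactly the definition of the best lacunary system, transferred from $L^p$ to $L_\Phi$.

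The only real subtlety is definitional. One must check that "best lacunary system in $L_\Phi$" compares against systems on arbitrary probability spaces, which is what Theorem~\ref{T:4} covers. One must also check that the $L^2$ norm there is with respect to the normalized measure, which is consistent with $ds$ on $S_n$. The remaining point is the implicit convexity hypothesis on $\Phi(\sqrt u)$, which should be stated explicitly, since the lower bound relies on Jensen's inequality for $H$ in the proof of Theorem~\ref{T:4}. I expect no computational obstacle beyond these checks.
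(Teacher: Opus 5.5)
Your proposal is correct and matches the paper. The paper's proof simply cites Theorems~\ref{T:3} and~\ref{T:4}, the first giving the exact constant $K_{\Phi,n}\sqrt n$ for the direction cosines and the second giving the matching lower bound for every system; you are also right that the corollary silently inherits the convexity hypothesis on $\Phi(\sqrt u)$ from Theorem~\ref{T:4}, and your extra checks (finiteness and positivity of $K_{\Phi,n}$, linear independence of the $\alpha_k$) fill in details the paper leaves implicit.
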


\begin{proof}
See Theorems~\ref{T:3} and \ref{T:4}.
\end{proof}

\end{document}